\documentclass[reqno, 10pt]{amsart}

\usepackage{amsmath,amssymb,amsthm, booktabs, tikz}
\usepackage{graphicx} 
\usepackage{algorithm}
\usepackage{algorithmicx}
\usepackage{algpseudocode}
\usepackage{url}

\newtheorem*{corollary}{Corollary}
\newtheorem*{thm}{Theorem}
\newtheorem{lemma}{Lemma}
\newtheorem*{fact}{Fact}

\title[]{Gradient descent with exponentially\\ increasing stepsizes and restarts}

\author[]{Fran\c{c}ois Cl\'ement}
\address{Department of Mathematics, University of Washington, Seattle}
\email{fclement@uw.edu}

\author[]{Stefan Steinerberger}
\address{Department of Mathematics and Department of Applied Mathematics, University of Washington, Seattle}
\email{steinerb@uw.edu}

\begin{document}
\begin{abstract} Let $f:\mathbb{R}^d \rightarrow \mathbb{R}$.
    We consider gradient descent $x_{n+1} = x_n - \tau_n \nabla f(x_n)$, where the stepsize $\tau_n = \tau \cdot e^{rn}$ is exponentially growing (with $\tau > 0$ and $0 < r \ll 1$). This diverges for almost all initial values.  We show that restarting the algorithm whenever $\|x_{n+1} - x_n\| \geq e^r\|x_n - x_{n-1}\|$ has good properties: it works very well in practice; we determine the limiting convergence rate in the case of convergence to a non-degenerate local minimum: it improves on classic gradient descent even though computational cost is comparable. The precise choice of $0 < r \ll 1$ does not matter much and the method is virtually independent of an initial stepsize $\tau$ that is too small:
    while the convergence rate for gradient descent decays linearly as $\tau \rightarrow 0$, it decays as $1/\log(1/\tau)$ in this modified version; numerical examples illustrate the results.
\end{abstract}

\maketitle

\vspace{-10pt}

\section{Introduction and Results}
\subsection{Introduction}
Let $f:\mathbb{R}^d \rightarrow \mathbb{R}$. We consider gradient descent
$$ x_{n+1} = x_n - \tau_n \nabla f(x_n)$$
with $\tau_n = e^{rn} \tau$ where $\tau > 0$ is an initial guess for the step-size and $0 < r \ll 1$ is a small positive parameter. One would not expect such a method to converge and it is easy to see that it does not. However, in what may appear tautological, it does \textit{very} well just before it starts to fail. Consider the one-dimensional example $f(x) = x^2/2$ with initial guess $x_0 = 1$ and stepsize $\tau =0.1$. Standard gradient descent leads to $x_n = 0.9^n$.  In comparison, using exponential stepsizes leads to the closed-form expression
$$ x_{n+1} = \prod_{k=0}^{n} \left(1 - \frac{e^{rk}}{10} \right).$$

\begin{center}
\begin{figure}[h!]
    \begin{tikzpicture}
        \node at (0,0) {\includegraphics[width=0.5\textwidth]{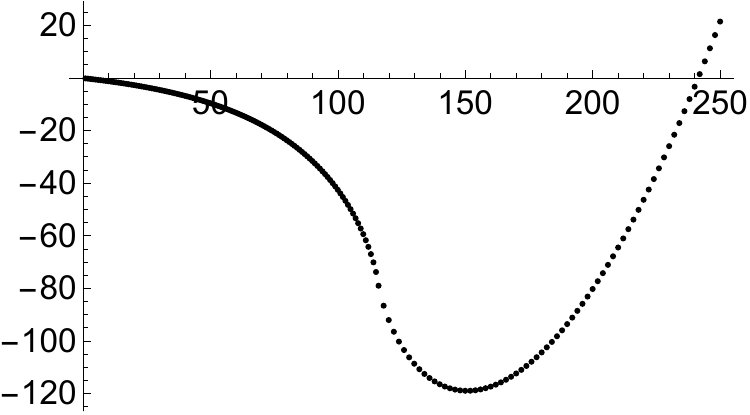}};
        \node at (5,0) {\Large $\log \left|\prod_{k=0}^{n} \left(1 - \frac{e^{0.02 k}}{10}\right) \right| $};
    \end{tikzpicture}
    \vspace{-10pt}
    \caption{The product in question for $r=0.02$.}
    \label{fig:1}
\end{figure}    
\end{center}

It is possible that $r>0$ may be chosen such that the product happens to be $0$ for all sufficiently large $n$, however, this only happens if $e^{r k} = 10$ for some $k \in \mathbb{N}$. For all other $r$ the product will eventually diverge.
Nonetheless, see Fig. 1, it is noteworthy that the product does get \textit{very} close to 0 before it converges.  While standard gradient descent reaches a value of $0.9^{250} \sim e^{-26} \sim 10^{-12}$ with the first 250 steps, this exponentially accelerated gradient descent reaches values of up to $e^{-120} \sim 10^{-52}$ within the first 250 steps around $n \sim 150$ (see Fig. 1). We were motivated by the question of whether this fact could somehow be exploited.

\subsection{Algorithm}
We propose a very simple algorithm: use  
$$ x_{n+1} = x_n -  e^{rn} \tau   \nabla f(x_n)$$
until the condition $\| x_{n+1} - x_n\| \leq e^r\|x_n - x_{n-1}\|$ is violated. One could alternatively think of stopping the algorithm once
$ \| \nabla f(x_n)\| \leq  \| \nabla f(x_{n-1})\|$ is violated.
Once the condition is triggered, we restart the algorithm with $x_0 = x_n$. A formal implementation of the algorithm is as follows.

\begin{algorithm}[h!]
\caption{Exponential Gradient Descent with Restarts }
\begin{algorithmic}
    \Require A function $f:\mathbb{R}^d \rightarrow \mathbb{R}$, an initial point $x_0 \in \mathbb{R}^d$, an initial stepsize $\tau > 0$, a multiplier $r>0$ and a desired number of steps $n \in \mathbb{N}$.
    \State Compute $x_1 = x_0 - \tau \cdot \nabla f(x_0)$.
    \State Set $k = 1$.
    \For{$2 \leq j \leq n$} 
        \State $y = x_{j-1} - \tau e^{r k} \cdot \nabla f(x_{j-1})$
        \If {$\|y-x_{j-1}\| \leq e^r\|x_{j-1}-x_{j-2}\|$ }
        \State $x_j = y$
       \State $k = k+1$
        \Else
        \State $x_j = x_{j-1} - \tau \nabla f(x_{j-1})$
        \State $k = 1$ 
        \EndIf
        \EndFor
  \end{algorithmic}
\end{algorithm}
This is computationally about as expensive as standard gradient descent: the new ingredient is the computation of the gap $\|x_{n} - x_{n-1}\|$, however, since that vector is computed anyway to update $x_{n-1}$ to $x_n$, the cost of computing its norm is negligible.  Like standard gradient descent, the algorithm requires a stepsize $r>0$ to be chosen, however, the algorithm exhibits a great deal of stability with respect to the choice of $r$ (see below for examples); this is fully explained by our analysis. If $r$ is too small, then the method behaves like standard gradient descent; if $r$ is too large, then many restarts will be triggered (which is not a priori a bad thing) and the method also behaves like standard gradient descent. In summary,
\begin{enumerate}
    \item we describe a method that is computationally about as expensive as standard gradient descent
    \item it has an additional parameter $0 < r \ll 1$ but its behavior is virtually independent of $r$ (in a way that can and will be made precise)
    \item it leads to significant speedups, especially when the condition number is large or the stepsize $\tau$ is small (see \S 1.4)
    \item and, empirically, it works well for convex and nonconvex problems (\S 4).
\end{enumerate}

\subsection{Main Result}
We can now state the main result which describes the asymptotic behavior of the algorithm close to a non-degenerate local minimum: the convergence rate is exponential. Moreover, in a fairly concrete sense, in practice that rate does \textit{not} actually depend on $r$ (in a way that will be made precise in \S 4 and \S 5). This means that, just as in the case of classical gradient descent, the exponential rate depends only on the largest and smallest eigenvalue of the quadratic form near the minimum as well as the stepsize $\tau$.

\begin{thm}[Main Result] Let $Q \in \mathbb{R}^{d \times d}$, $d \geq 2$, be a symmetric positive-definite matrix with eigenvalues $\lambda_{\max} \geq \dots \geq \lambda_{\min} > 0$, let $f(x) = \left\langle x, Qx \right\rangle$, let $x_0 \in \mathbb{R}^d$ and let $\tau < 1/\lambda_{\max}$. Then, for all $0 < r < r_0(Q, x_0, \tau)$ sufficiently small (up to an exceptional set), we have, for all $n$ sufficiently large,
$$ \frac{1}{n} \log{\|x_n\|} = - (1+o(1)) \cdot c(\lambda_{\max}, \lambda_{\min}, \tau) ,$$
where $o(1)$ is with respect to $r \rightarrow 0$. Moreover, the constant $c(\lambda_{\max}, \lambda_{\min}, \tau)$ has an explicit description: using $\Phi(y)$ to denote Spence's function
$$ \Phi(y) = - \int_0^y \frac{\log |1-z|}{z} dz$$
and $x \in \mathbb{R}_{>0}$ to be the solution of 
$$ \Phi(\tau \lambda_{\max}) - \Phi(\tau \lambda_{\max} e^{x}) = \Phi(\tau \lambda_{\min}) - \Phi(\tau \lambda_{\min} e^{x}),$$
we have 
$$ c(\lambda_{\max}, \lambda_{\min}, \tau) = \frac{1}{x}  \left( \Phi(\tau \lambda_{\min} e^{x}) - \Phi(\tau \lambda_{\min}) \right).$$
Moreover, the asymptotic density of restarts is given by $(1+o(1)) \cdot r/x$.
\end{thm}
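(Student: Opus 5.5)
We diagonalize $Q$ and study each eigencomponent separately. Write $x_n=\sum_i a_i^{(n)} v_i$ with $Qv_i=\lambda_i v_i$, and absorb the constant in $\nabla f$ into $\tau$, as the statement implicitly does. Within one run between two restarts (counter $k=0,1,\dots,m$), each component is multiplied by $\prod_{k=0}^{m-1}(1-\tau\lambda_i e^{rk})$. The plan has three parts: (i) a Riemann-sum asymptotic for these products, (ii) an identification of the restart time in terms of these asymptotics, and (iii) a self-consistency argument showing the dynamics settle into a periodic regime of cycle length $\approx x/r$.

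\textbf{Step 1 (Riemann sums).} Substituting $z=\tau\lambda e^{s}$, so that $ds=dz/z$, gives for $X=rm$
$$\sum_{k=0}^{m-1}\log|1-\tau\lambda e^{rk}| = \frac{1}{r}\int_0^{X}\log|1-\tau\lambda e^{s}|\,ds+O(\cdots) = \frac{1}{r}\bigl(\Phi(\tau\lambda)-\Phi(\tau\lambda e^{X})\bigr)+o(1/r).$$
The integrand has only a logarithmic (integrable) singularity at $s=\log(1/(\tau\lambda))$. Hence the Riemann-sum error is $o(1/r)$, except when some grid point $rk$ falls extremely close to that singularity. Excluding such $r$ is what the exceptional set is for; for instance, we discard $r$ with $|1-\tau\lambda_i e^{rk}|<e^{-\epsilon/r}$ for some relevant $k$, which is a set of $r$ of small measure. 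We also assume the components of $x_0$ along $v_{\max}$ and $v_{\min}$ are nonzero. Write $g_\lambda(X)=\Phi(\tau\lambda)-\Phi(\tau\lambda e^X)$. This function decreases while $\tau\lambda e^{X}<2$ and increases afterwards. Therefore the $\lambda_{\min}$ component keeps decaying much longer, while the $\lambda_{\max}$ component turns around first and eventually blows up.

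\textbf{Step 2 (restart time).} On the $1/r$ scale, $\log\|x_n\|\approx\max_i\bigl(\log|a_i^{(\text{start})}|+g_{\lambda_i}(rk)/r\bigr)$. The ratio of consecutive step lengths is, to leading order, $e^{r}|1-\tau\lambda_* e^{rk}|$, where $\lambda_*$ is the eigenvalue of the currently dominant component. The restart condition is therefore triggered essentially when the dominant component switches to one that is growing, i.e. to a $\lambda$ with $\tau\lambda e^{rk}>2$. In the steady state each run begins right after a restart, with the $\lambda_{\min}$ and $\lambda_{\max}$ components of comparable size on the exponential scale $e^{O(1)/r}$. Along the run the $\lambda_{\min}$ component dominates until $g_{\lambda_{\max}}(X)=g_{\lambda_{\min}}(X)$, at which point the growing $\lambda_{\max}$ component takes over and a restart fires. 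So the run length is $X=x$, the positive root of the equation in the statement, up to $o(1)$. After the restart the two components are again balanced, which is exactly the self-consistency needed.

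For intermediate eigenvalues I would show, using the explicit integral form, that $\lambda\mapsto g_\lambda(X)$ on $[\lambda_{\min},\lambda_{\max}]$ never exceeds $\max(g_{\lambda_{\min}}(X),g_{\lambda_{\max}}(X))$ for $X\le x$. Then these components never dominate, and their relative size only shrinks from cycle to cycle. Concretely, I would argue via monotonicity or convexity in $\log\lambda$ of $\int_0^X\log|1-\tau\lambda e^s|\,ds$.

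\textbf{Step 3 (conclusion and obstacle).} Given the periodic regime, each cycle has $x/r\,(1+o(1))$ steps and multiplies the norm by $\exp\bigl(-(1+o(1))\,(\Phi(\tau\lambda_{\min}e^{x})-\Phi(\tau\lambda_{\min}))/r\bigr)$. Dividing the decay per cycle by the cycle length yields $\frac1n\log\|x_n\|\to -c(1+o(1))$, and the restart density is $r/x$. The main obstacle is Step 2: proving that the restart really occurs at the crossover time $x/r+o(1/r)$, and not earlier, on a transient near the singularity $\tau\lambda e^{rk}\approx 1$ or during a brief non-monotone stretch where the ratio of step lengths exceeds $e^r$. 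Closely related is showing that the map from one cycle's starting configuration to the next converges to the balanced fixed point regardless of $x_0$, which is why ``$n$ sufficiently large'' is needed. My first attempt would be to track only the log-ratio $\rho$ of the two extreme components at each restart. I would show that the induced one-dimensional map on $\rho/r$ is a contraction up to $O(1)$ errors, and put all the bad $r$, where the grid nearly hits a singularity, into the exceptional set.
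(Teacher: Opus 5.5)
Your outline follows the paper's proof step for step. Both diagonalize, apply the Spence-function asymptotics of the Main Lemma to each product, use unimodality of $\lambda\mapsto g_\lambda(X)$ to suppress the intermediate eigenvalues, place the restart at the crossover $g_{\lambda_{\max}}(x)=g_{\lambda_{\min}}(x)$, and take the geometric mean over a cycle. You are more explicit than the paper about the exceptional set and about consistency from one cycle to the next. There is, however, a genuine gap in Step 2, and the paper's proof tacitly makes the same assumption.

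\textbf{The gap.} You assert that at the crossover ``the growing $\lambda_{\max}$ component takes over.'' This requires $\tau\lambda_{\max}e^{x}>2$, which does not follow from the definition of $x$.

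\textbf{Why it matters.} As long as $\tau\lambda_{\max}e^{rk}<2$, every multiplier satisfies $|1-\tau\lambda_i e^{rk}|<1$. So $\|Qx\|$ strictly decreases and no restart can occur at all. The first run therefore ends, to leading order, at $\max(x,X_1)$ with $X_1=\log(2/(\tau\lambda_{\max}))$, not at $x$.

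\textbf{When it fails.} Put $D=g_{\lambda_{\max}}-g_{\lambda_{\min}}$. Since $\Phi(2)=\pi^2/4$,
\[ D(X_1)=\Phi(\tau\lambda_{\max})-\pi^2/4+\Phi(2\lambda_{\min}/\lambda_{\max})-\Phi(\tau\lambda_{\min}). \]
This can be positive. To first order in $1-\lambda_{\min}/\lambda_{\max}$ it equals $(1-\lambda_{\min}/\lambda_{\max})\log\frac{1}{1-\tau\lambda_{\max}}>0$. Concretely, $\tau\lambda_{\max}=0.5$ and $\tau\lambda_{\min}=0.4$ give $D(X_1)\approx 0.079$ and $x\approx 1.25<\log 4=X_1$.

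\textbf{Consequence in that regime.}
\begin{enumerate}
\item The first run stops at $X_1$ with the $\lambda_{\max}$ component ahead by a factor $e^{D(X_1)/r}$.
\item This lead grows by $D(X_1)/r$ every cycle, so your balanced fixed point is never reached.
\item The rate becomes the one-dimensional one, $(\pi^2/4-\Phi(\tau\lambda_{\max}))/\log(2/(\tau\lambda_{\max}))$. In the example this is about $1.36$, against about $1.49$ from the stated formula.
\end{enumerate}

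\textbf{What is needed.} Your argument, like the paper's, needs the extra hypothesis $D(X_1)<0$, equivalently $\tau\lambda_{\max}e^{x}>2$. It holds, for instance, when $\tau\lambda_{\max}$ is small, since then $D(X_1)\to\Phi(2\lambda_{\min}/\lambda_{\max})-\pi^2/4<0$. It also holds for large condition numbers and in the paper's worked examples.

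\textbf{Closing Step 2 under this hypothesis.} The obstacle you flag becomes mild.
\begin{enumerate}
\item The bound $\tau\lambda_{\min}e^{x}<2$ is automatic, because $\min g_{\lambda_{\min}}=\Phi(\tau\lambda_{\min})-\pi^2/4<\min g_{\lambda_{\max}}$.
\item Hence on $[X_1,x-\epsilon]$ the dominant multiplier stays below $1-\delta$, while the expanding component has relative weight $e^{-c/r}$.
\item The restart rule itself resets the log-ratio of the two extreme components to $o(1/r)$ at each restart, so you do not need a separate contraction argument.
\end{enumerate}
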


\textbf{Comments.} 
\begin{enumerate}
    \item The result is stated for quadratic forms, however, in practice it very accurately predicts the behavior of the method near global minima (perhaps not surprising, there is a great deal of robustness and the higher order terms are vanishingly small); see below for an example.
    \item The `exceptional set' of values of $r$ is necessary; however, the set is benign in the sense that if
    $r$ is chosen to be in the set, then the convergence rate will be even faster (see \S 5.2 for an example). The set is extremely small; constructing an element would require knowledge of the eigenvalues of $Q$ which, in practice, are not known.
    \item The asymptotic behavior is determined as $r \rightarrow 0^+$, however, the convergence rate in $r$ is remarkably benign (see also \S 4 and \S 5 for the reason).
\end{enumerate}

We illustrate the result for a concrete toy problem: the Rosenbrock function \cite{rosenbrock}
$$f(x,y) = x^2 + 100 (y-x^2)^2.$$
It has a strict global minimum in $(0,0)$ but its `banana-shaped' level sets can pose a challenge for gradient descent algorithms.
 We consider the behavior under 50 initial points that are equispaced on the unit circle centered around the global minimum, i.e.
$ x_0 = \left( \cos\left( 2\pi k/50 \right), \sin\left( 2\pi k/50 \right) \right)$ for $k=1, \dots, 50$.
The natural point of comparison is standard gradient descent $x_{n+1} = x_n - \tau \nabla f(x_n)$. There is the issue of choosing the stepsize $\tau$. Testing numerically, we see divergence at $\tau = 0.003$ and pick $\tau =0.001$. The asymptotic convergence rate is then given by $(1-\tau \lambda_{\min}((D^2f)(0,0)))^n \sim e^{-0.002n}$ and this is what is observed in Fig. \ref{fig:2} (left).

\begin{center}
    \begin{figure}[h!]
       \begin{tikzpicture}
    \node at (0,0) {\includegraphics[width=0.42\linewidth]{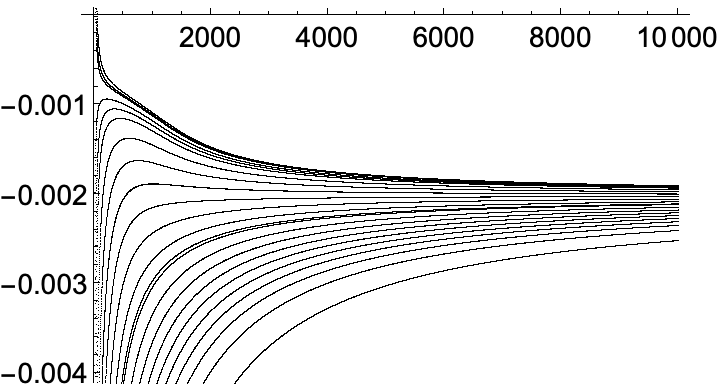}};
        \node at (6,0) {\includegraphics[width=0.42\linewidth]{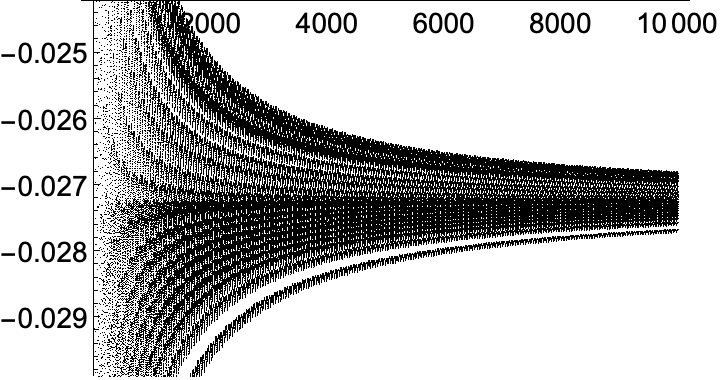}};
       \end{tikzpicture}
        \caption{The behavior of $n^{-1} \log \|x_n - x^*\|$ for the usual gradient descent (left) and Algorithm 1 (right) for 50 initial values.}
        \label{fig:2}
    \end{figure}
\end{center}

We compare this to the performance of Algorithm 1 with the same value $\tau = 0.001$. We set $r=0.1$ and see, numerically in Fig. \ref{fig:2}, a nearly ten-fold increase in convergence speed. Using that the eigenvalues at the critical point are $200$ and $2$, the Theorem asks us to solve the equation 
$$  \Phi(0.2 ) - \Phi(0.2 e^{x}) = \Phi(0.002) - \Phi(0.002 e^{x}),$$
which a standard numerical root finder finds to have the solution $x \sim 4.0072$. The Theorem now predicts a convergence rate of $\exp(- c n)$ with the constant given by
$$ c(200,2, 0.001) = \frac{1}{x} \left( \Phi(0.002) - \Phi(0.002 e^{x})\right) = 0.0277415\dots$$
which is what is observed in Fig. \ref{fig:2} (right). We had to pick a new parameter, $r$. However, the method is \textit{very} robust and we get similar rates for different values of $r$ (this is not mysterious and explained by the proof, see \S 3).

\begin{center}
    \begin{figure}[h!]
       \begin{tikzpicture}
    \node at (0,0) {\includegraphics[width=0.3\linewidth]{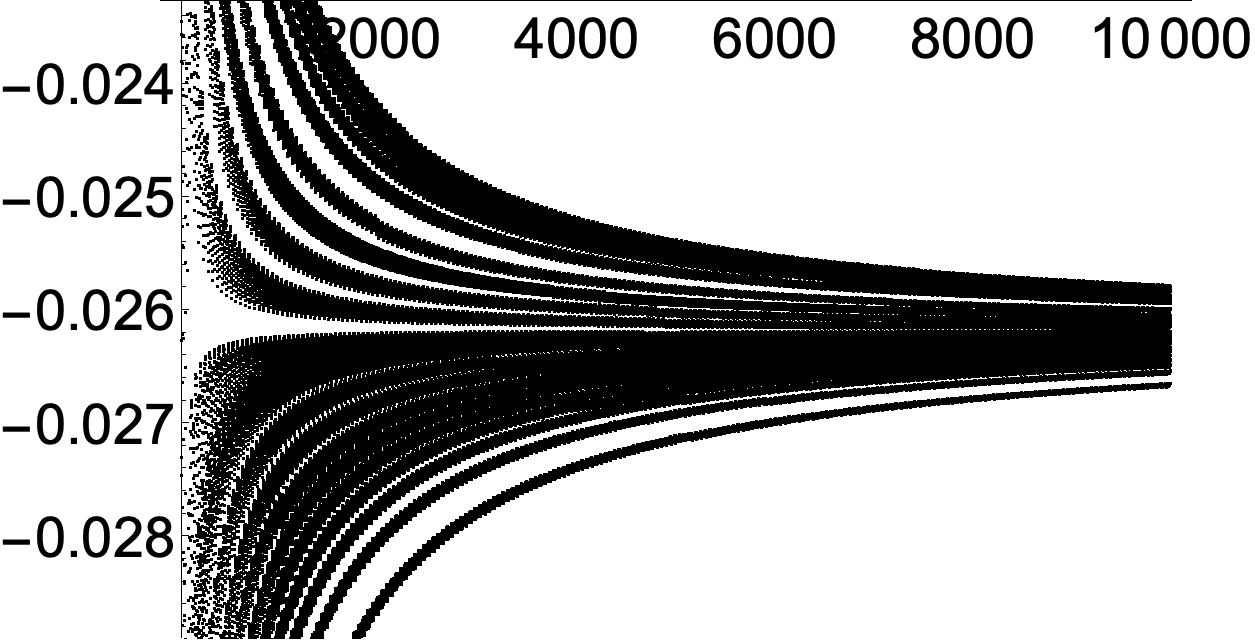}};
        \node at (4,0) {\includegraphics[width=0.3\linewidth]{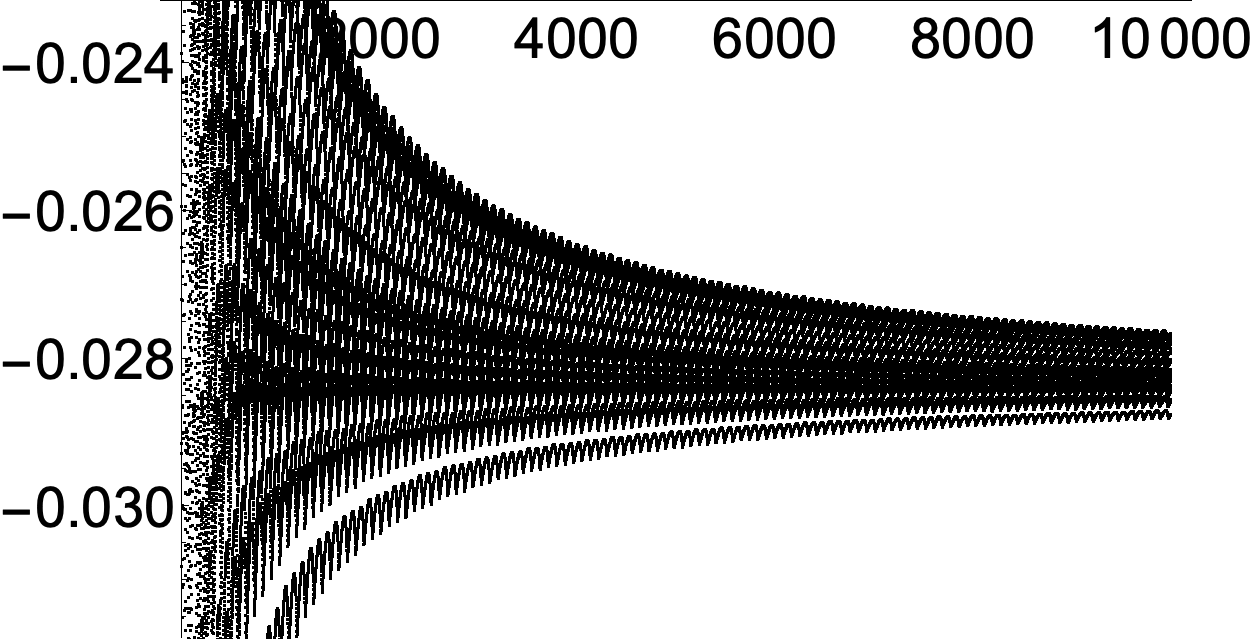}};
    \node at (8,0) {\includegraphics[width=0.3\linewidth]{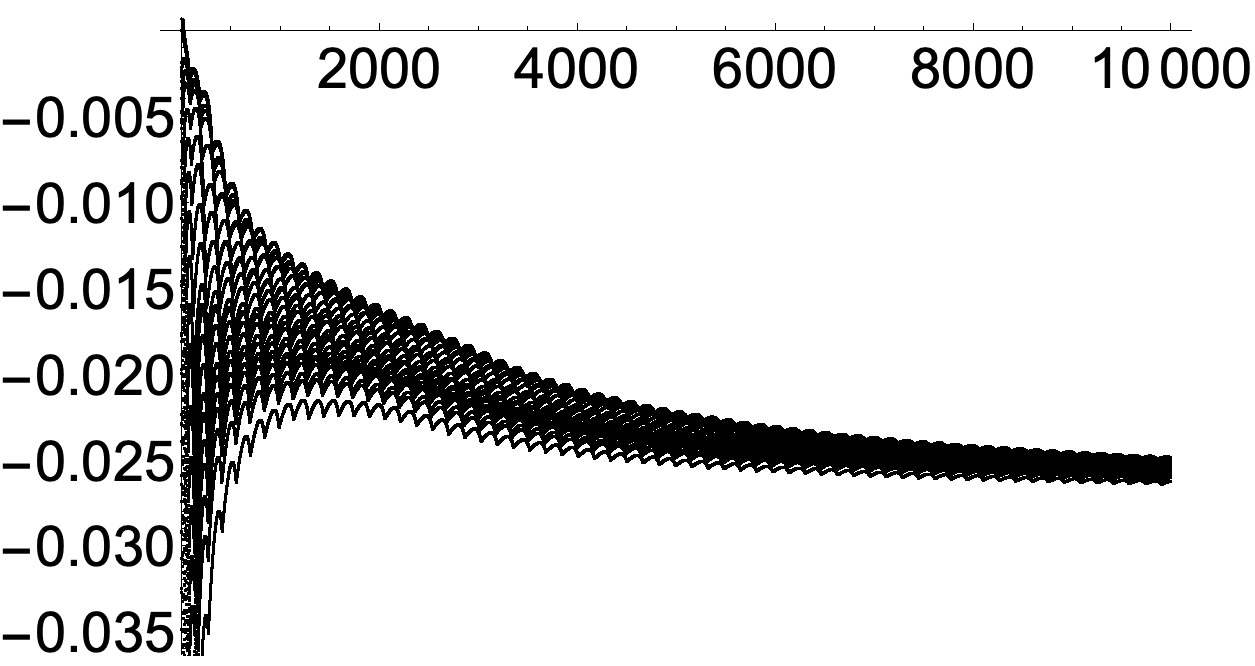}};
    \node at (0, -1.4) {$r = 0.2$};
    \node at (4, -1.4) {$r = 0.05$};
    \node at (8, -1.4) {$r = 0.025$};
       \end{tikzpicture}
        \caption{The convergence rate is stable for different values of $r$.}
        \label{fig:3}
    \end{figure}
\end{center}

\subsection{Comparison to the standard gradient descent}
We quickly comment on how this accelerated method compares to standard gradient descent. It is clear that standard gradient descent with choice
$ \tau = 2/(\lambda_{\min} + \lambda_{\max})$
will always win any comparison; however, in practice we do not know $\lambda_{\min}$ and $\lambda_{\max}$ and we do not know how to choose $\tau$. We observe that once $\lambda_{\min} < \lambda_{\max}/2$, this modified method is always at least $50\%$ better in the allowable regime.

\begin{fact}
    If $\lambda_{\min} < \lambda_{\max}/2$ and $0 < \tau < 1/\lambda_{\max}$, then
    $$  c(\lambda_{\max}, \lambda_{\min}, \tau) \geq \frac{3}{2} \cdot \log \left| 1- \tau \lambda_{\min} \right| $$ 
\end{fact}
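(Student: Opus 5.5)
Throughout, write $a=\tau\lambda_{\min}$ and $b=\tau\lambda_{\max}$. The hypotheses give $0<a<b/2<1/2$, so $0<1-a<1$ and the right-hand side $\tfrac32\log|1-\tau\lambda_{\min}|$ is \emph{strictly negative}. The plan is therefore to prove the stronger fact $c(\lambda_{\max},\lambda_{\min},\tau)>0$, from which the stated inequality follows at once. For this argument we only need $a<b$; the extra hypotheses $a<b/2$ and $b<1$ are not used beyond that.

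\textbf{Step 1: a windowed-integral form of $c$.} Substitute $z=se^{t}$ in the definition of $\Phi$. For $s>0$ this gives
$$\Phi(se^{x})-\Phi(s)=-\int_0^x \log|1-se^t|\,dt .$$
Put $\varphi(v)=-\log|1-e^{v}|$ and $F(y)=\int_y^{y+x}\varphi(v)\,dv$. Then $\Phi(se^x)-\Phi(s)=F(\log s)$, so with $\alpha=\log a$ and $\beta=\log b$:
\begin{itemize}
\item the defining equation for $x$ says exactly $F(\alpha)=F(\beta)$, with $\beta>\alpha$;
\item the constant is $c=F(\alpha)/x$, and $x>0$ by assumption.
\end{itemize}
So it suffices to show $F(\alpha)>0$. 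The function $\varphi$ has only a logarithmic, hence integrable, singularity at $v=0$, so $F$ is well defined and absolutely continuous, with $F'(y)=\varphi(y+x)-\varphi(y)$ for almost every $y$.

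\textbf{Step 2: the shape of $\varphi$.}
\begin{itemize}
\item On $(-\infty,0)$, $\varphi(v)=-\log(1-e^v)$ is positive and increasing.
\item On $(0,\infty)$, $\varphi(v)=-\log(e^v-1)$ is strictly decreasing. It is positive for $v<\log 2$ and negative for $v>\log 2$.
\end{itemize}

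\textbf{Step 3: argue by contradiction.} Suppose $F(\alpha)\le 0$. Since $\varphi>0$ on $(-\infty,\log 2)$, the window $[\alpha,\alpha+x]$ must reach beyond $\log2$, so $\alpha+x>\log 2$. Now take any $y\ge\alpha$ with $y\neq 0$.
\begin{itemize}
\item If $y>0$, then $\varphi(y+x)<\varphi(y)$ because $\varphi$ is strictly decreasing on $(0,\infty)$.
\item If $y<0$, then $\varphi(y)>0$. Also $y+x\ge\alpha+x>\log 2$, so $\varphi(y+x)<0$.
\end{itemize}
In both cases $F'(y)<0$, hence $F$ is strictly decreasing on $[\alpha,\infty)$. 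This contradicts $F(\beta)=F(\alpha)$ with $\beta>\alpha$. Therefore $F(\alpha)>0$, and so
$$c=\frac{F(\alpha)}{x}>0>\tfrac32\log|1-\tau\lambda_{\min}|.$$

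\textbf{Main obstacle.} The only delicate point is justifying the monotonicity of $F$ through the singularity of $\varphi$ at $0$. I would handle it via absolute continuity: $F'<0$ almost everywhere on $[\alpha,\infty)$ already forces $F$ to be strictly decreasing there. Everything else is routine sign analysis.
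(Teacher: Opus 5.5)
As a proof of the inequality exactly as typed, your argument is correct. The substitution $z=se^t$ giving $\Phi(se^x)-\Phi(s)=F(\log s)$ is right, and so is the sign analysis of $\varphi$. The monotonicity argument is also sound, including the singularity at $0$ and the observation that $y+x>\log 2$ for $y\in[\alpha,0)$. It shows $F(\alpha)>0$, i.e. $c>0$ for every admissible root $x>0$, using only $\tau\lambda_{\min}<\tau\lambda_{\max}$. Since $\log|1-\tau\lambda_{\min}|<0$, the typed inequality follows.

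However, the displayed statement almost certainly contains a sign slip, and what you have proved is a vacuous version of it. The surrounding text says the method is ``at least $50\%$ better'' than gradient descent. Gradient descent decays like $(1-\tau\lambda_{\min})^n=\exp\bigl(-n\log\frac{1}{1-\tau\lambda_{\min}}\bigr)$, so the intended claim is $c\geq\frac32\log\frac{1}{1-\tau\lambda_{\min}}$. The warning sign is that neither $\lambda_{\min}<\lambda_{\max}/2$ nor $\tau<1/\lambda_{\max}$ plays any role in your proof.

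The paper gives no analytic proof of the intended inequality. It calls it a ``Fact'' because it was checked numerically (``beyond any numerical doubt''). So there is no paper argument to compare your route against. Your positivity argument is a genuinely rigorous small lemma: the paper only implicitly assumes $c>0$. But it does not touch the quantitative $50\%$ content.

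For that content you would need a lower bound on $x$ that actually uses $a=\tau\lambda_{\min}\le b/2$ and $b=\tau\lambda_{\max}\le 1$. You would also need to control the growth of $\varphi$ across the window $[\alpha,\alpha+x]$. For instance, when $ae^x\le 1$, monotonicity of $-\log(1-z)/z$ gives $-\log(1-ae^s)\ge e^s\log\frac{1}{1-a}$. Integrating over the window yields $c\ge\frac{e^x-1}{x}\log\frac{1}{1-a}$, which would suffice once $x\ge 0.77$.

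Near the corner $a\to\frac12$, $b\to 1$, however, one finds (by my own numerical estimate) $x\approx 0.63$ and $\frac{e^x-1}{x}\approx 1.39$. The true value there is $c\approx 1.30$, against the target $\frac32\log 2\approx 1.04$. So this simple bound falls short in that corner, and a proof would have to exploit the sharper blow-up of $-\log(1-z)$ as $z\to 1$.
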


Since $ c(\lambda_{\max}, \lambda_{\min}, \tau)$ is defined implicitly via the solution of a fixed point equation involving special functions, it is nontrivial to work with (and we use `Fact' to indicate that it was obtained beyond any numerical doubt). More importantly, if either $\tau$ is very small, $\tau \ll 1/\lambda_{\max}$, or if the smallest eigenvalue is quite small, $\lambda_{\min} \ll \lambda_{\max}$, then the improvement grows and is unbounded. Intuitively, this is not surprising: the slow exponential growth will gradually find larger values that perform better; it is this regime (unknown $\lambda_{\min}, \lambda_{\max}$ and a value of $\tau$ that is perhaps a little bit too small) where we believe this modification of gradient descent to be particularly useful.
Classic gradient descent is very sensitive with regards to the choice of the stepsize $\tau$.  As is seen via  Taylor expansion
$$- \log \left( \frac{1}{1 - \tau \lambda_{\min}/\lambda_{\max}}\right) = - \frac{\lambda_{\min}}{\lambda_{\max}}\tau + \mathcal{O}(\tau^2);$$
while choosing $\tau$ to be too large will immediately lead to problems with convergence, choosing $\tau$ just a bit too small can dramatically slow down the convergence rate. The exponential stepsize turns out to be a very useful remedy: the dependence on the stepsize improves from linear to merely logarithmic.

\begin{corollary}[Stability for small stepsizes] If $\lambda_{\min}  \leq \lambda_{\max}/100$, then
$$\forall~0 < \tau < 1 \qquad \qquad c(\lambda_{\max}, \lambda_{\min}, \tau) \geq \frac{12 }{\log(13/\tau)} \frac{\lambda_{\min}}{\lambda_{\max}}.$$
\end{corollary}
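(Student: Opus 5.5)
The plan is to replace the implicit definition of $c(\lambda_{\max},\lambda_{\min},\tau)$ from the Main Result by two integrals, and then bound the root $x$ from below. Throughout, $0<\tau<1$ is read as $0<\tau\lambda_{\max}<1$. Since $c$ depends only on $a=\tau\lambda_{\max}$ and $b=\tau\lambda_{\min}$, I normalize $\lambda_{\max}=1$, so $a=\tau<1$ and $b=\rho\tau$ with $\rho=\lambda_{\min}/\lambda_{\max}\le 1/100$.

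\textbf{Step 1: integral form.} Substituting $z=ye^s$ in the definition of Spence's function gives
$$\Phi(ye^x)-\Phi(y)=\int_0^x -\log|1-ye^s|\,ds=:G_y(x).$$
The defining equation of the Main Result becomes $G_a(x)=G_b(x)$, and $c=G_b(x)/x$. This is the average log-contraction of the $\lambda_{\min}$-direction over one restart cycle, whose length $x$ is fixed by the $\lambda_{\max}$-direction returning to net contraction zero.

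\textbf{Step 2: reduce to a lower bound on $x$.} As long as $be^x\le 1$, the inequality $-\log(1-u)\ge u$ gives
$$c\ \ge\ b\,\frac{e^x-1}{x}.$$
The map $x\mapsto (e^x-1)/x$ is increasing. Hence it suffices to prove $x\ge x_*:=\log(K/\tau)$ for a suitable absolute constant $K$. Then
$$c\ \ge\ \rho\,\frac{K-\tau}{\log(K/\tau)}.$$
With $K=13$ and $\tau<1$ this is at least $12\rho/\log(13/\tau)$, which is the claim; this is where the constants $12$ and $13$ should come from. The requirement $be^x\le1$ is harmless. The root $x$ is finite and bounded by $\log(C/a)$ for an absolute $C$, because $G_a$ becomes very negative once $ae^s>2$. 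Therefore $be^x\le C\rho\le C/100$. If this bound turns out to be too tight, one can also use the stronger estimate $-\log(1-u)\ge u+u^2/2$.

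\textbf{Step 3: the lower bound on $x$.} The root $x$ is the first positive zero of $G_a-G_b$, and this difference is positive for small $x$ since $a>b$. So $x\ge x_*$ follows once $G_a(s)>G_b(s)$ on $(0,x_*)$, that is,
$$\Phi(ae^s)-\Phi(a)\ >\ \Phi(be^s)-\Phi(b).$$
The right-hand side is at most $\Phi(13\rho)-\Phi(0)\le\Phi(0.13)\approx0.134$. For the left-hand side I use that $\Phi$ increases on $(0,2)$ and decreases on $(2,\infty)$, with the closed form
$$\Phi(y)=\frac{\pi^2}{3}-\tfrac12\log^2 y-\operatorname{Li}_2(1/y),\qquad y>1.$$
This reduces the task to a one-variable inequality in $a\in(0,1)$.

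\textbf{Main obstacle.} Step 3 is the hard part. A rough check suggests that for $a$ close to $1$ the crossing point $ae^x$ is only about $5$ to $6$, not $13$, because $\Phi(a)$ is then close to $\pi^2/6$. So $K=13$ cannot be obtained uniformly from Step 2 alone. I would therefore split into two regimes.
\begin{itemize}
\item For small $\tau$ (say $\tau\le\tau_0$), $\Phi(a)\approx a$ and the crossing value of $ae^x$ tends to the positive zero of $\Phi$ on $(2,\infty)$. This gives $x=\log(K'/\tau)+o(1)$, after which Step 2 applies with room to spare.
\item For $\tau\in[\tau_0,1)$, the denominator $\log(13/\tau)$ is bounded. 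There I would bound $c$ directly by evaluating $G_a$ and $G_b$ with rigorous monotonicity and interval estimates, in the spirit of the `Fact'. Since $c$ is linear in $\rho$ to leading order, the estimate can be checked at the extreme ratio $\rho=1/100$.
\end{itemize}
Any slack needed in this compact regime would come from keeping the quadratic term in $-\log(1-u)$ in Step 2.
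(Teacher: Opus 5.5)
Your Steps 1 and 2 are correct, but the plan has a gap that cannot be closed, and it is the one you flagged yourself. In the regime $\tau\lambda_{\max}\in[\tau_0,1)$ there is nothing for interval estimates to verify, because the inequality is false there.

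\textbf{Counterexamples.} Take $\lambda_{\max}=1$, $\lambda_{\min}=1/100$, $\tau=1/5$, so $a=0.2$, $b=0.002$. Since $c$ depends only on $(\tau\lambda_{\max},\tau\lambda_{\min})$, this is the paper's Rosenbrock instance $c(200,2,0.001)$. For it the paper reports $x\approx 4.0072$ and $c\approx 0.02774$, whereas the claimed bound is $0.12/\log 65\approx 0.02875$. Closer to $\tau\lambda_{\max}=1$ the failure is large. For $\tau=0.99$ my computation puts the crossing at $ae^x\approx 5.54$, so $x\approx 1.722$ and $c=\bigl(\mbox{Li}_2(0.0554)-\mbox{Li}_2(0.0099)\bigr)/x\approx 0.0269$, against a claimed $0.12/\log(13/0.99)\approx 0.0466$. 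For ratio $1/100$ the bound already fails at $\tau\lambda_{\max}=0.15$. These are values of $c$ itself, not of a lower bound, so no sharpening of Step 2 (quadratic term or otherwise) can help.

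\textbf{Step 3 target.} Independently, your target $x\ge\log(13/\tau)$ is unattainable for every $\tau$, not only near $1$. At the crossing $\Phi(ae^x)=\Phi(a)+G_b(x)>0$, while $\Phi<0$ beyond its positive zero $y^*\approx 12.595$. Hence $ae^x<y^*<13$ always.

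\textbf{Relation to the paper's proof.} The paper's proof is in substance your first bullet applied to all $\tau$. After normalizing $\lambda_{\max}=1$ it rewrites the defining equation using $\Phi(y)=\pi^2/3-\mbox{Li}_2(1/y)-\tfrac12\log^2 y$ for $y>1$, but it drops the term $\Phi(\tau)=\mbox{Li}_2(\tau)$ from the left-hand side. It then treats the $\lambda_{\min}$-side as an $O(\lambda_{\min})$ perturbation, obtains $\tau e^x=y^*\pm O(\lambda_{\min})$, and reads off $c\approx\lambda_{\min}(y^*-\tau)/\log(y^*/\tau)$. That is legitimate only when $\mbox{Li}_2(\tau)$ is negligible, i.e.\ for small $\tau$. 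This is exactly the effect your rough check detected and the paper's argument overlooks.

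\textbf{What your approach can deliver.} With $K$ replaced by the true crossing value of $ae^x$, Steps 1--2 give only the small-$\tau$ case. Even there, with the ratio fixed at $1/100$ the crossing tends to about $11.97$ rather than $y^*$. So for extremely small $\tau$ you need the exact $G_b$, not its linearization. A bound of this shape valid for all $0<\tau<1$ needs a much smaller numerator: as $\tau\lambda_{\max}\to 1$, the quantity $c\log(13/\tau)\,\lambda_{\max}/\lambda_{\min}$ is only about $6.8$.
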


The condition $\lambda_{\min}  \leq \lambda_{\max}/100$ was imposed to allow for a slightly more transparent proof; the proof could, with little work, be extended to slightly larger values of $\lambda_{\min}$. We illustrate this again by analyzing how the limiting convergence rate for the Rosenbrock function depends on $\tau$, see Table 1.

\begin{table}[h!]
\begin{tabular}{c|c|c|c|c}
$\tau$                      & 0.001    & 0.0001    & 0.00001   & 0.000001   \\
\hline
$c$ in $\exp(- c n)$         & 0.027 & 0.0191 & 0.01418 & 0.011249 \\    
\end{tabular}
\vspace{5pt}
\caption{The convergence rate of $\|x_n - x_{\min}\|$ for the Rosenbrock function as a function of the stepsize $\tau$.}
\end{table}

This may turn out to be tremendously useful in practice; finding the right stepsize $\tau$ is a consistent challenge when using gradient descent. If $\tau$ is too large, one does not converge, if $\tau$ is too small, then it takes forever. 
While stepsize remains a factor here, larger is still better, the logarithmic decay means that guessing too small has less of an impact. In the case of the Rosenbrock example above, we obtain a convergence rate better than standard gradient descent with $\tau = 0.001$ for the entire parameter range $10^{-26} = 0.00000000000000000000000001 \leq \tau \leq 0.001$.

\subsection{Related results.} This method does not seem to exist in the literature. We became interested in the problem because of recent work in the area of non-constant and `long' gradient steps \cite{Altschuler1, Altschuler3, bb, dasGupta, Grimmer, Grimmer2, mal, renegar}. These results are quite different, stepsizes are chosen a priori for all time. The closest (in spirit) existing idea is perhaps that of the \textit{bold driver} proposed by Vogl et al. \cite{vogl} and Battiti \cite{batt} in the late 1980s. The rule, with the typical parameters, is as follows
\begin{enumerate}
    \item Set $x_{n+1} = x_n - \tau_n \nabla f(x_n)$.
    \item If $f(x_{n+1}) < f(x_n)$, then $\tau_{n+1} = \frac{11}{10} \cdot \tau_n$, else $\tau_{n+1} = \frac{1}{2} \cdot \tau_n$.
\end{enumerate}
It is an interesting \textit{``quick and dirty'' method} (Battiti \cite{batt}) and performs quite well in practice. Its main difficulties are (a) that it is not clear how to choose the parameters ($1.1$ and $0.5$) and (b) a complete lack of theory. The method is fairly different from ours insofar as it requires a function evaluation. The bold driver heuristic has persisted in the literature, it has since been observed several time in the literature that in \textit{certain} applications an exponentially increasing stepsize can work \cite{LiExp2}. For functions that decay faster than quadratically, i.e. $f(x) = \|x\|^p$ with $p > 2$, this was proposed by Ho, Ren, Sanghavi, Sarkar and Ward \cite{HoExp}; they do not discuss any type of stopping/restarting criterion which means that their approach is not applicable to strictly convex functions, however, they note the phenomenon that we discuss in Figure \ref{fig:1} in their Figure 3. Restarts were considered, among others, by Donoghue-Candes \cite{do}, Renegar-Grimmer \cite{renegar} and  Roulet-d'Aspresmont \cite{roulet}. We also refer to the excellent survey by d'Aspremont-Scieur-Taylor \cite{Aspremont}.

\section{The Product Lemma}
We start with a precise analysis of the one-dimensional case: assume without loss of generality that $f(x) = \alpha x^2/2$ for some $\alpha > 0$ and that $x_0 > 0$.  The recursion formula is
\begin{align*}
   x_{n+1} = x_n - \tau e^{r n} f'(x_n) =  \left(1 -  \tau \alpha e^{r n} \right) x_n.
\end{align*}
Therefore, without restarts, we have
$$ x_{n+1} = x_0 \prod_{k=0}^{n}  \left(1 -  \tau \alpha e^{rk} \right).$$
The rest of this section is dedicated to analyzing the product
$$ \prod_{k=0}^{n}  \left(1 -  q e^{rk} \right) \qquad \mbox{where}~q >0~\mbox{and}~0 < r \ll 1.$$
We start with some general observations. First, in order to avoid over-oscillation of the gradient descent, we need $\tau \alpha < 1$ which suggests that the product to be analyzed is relevant for $0 < q < 1$. 
The parameter $r>0$ is user-specified; we may assume that it is small (say $r=0.01$ or even smaller than that). The subsequent analysis will show that the method does not depend very much on the precise value of $r$.
 Since $q = \tau \alpha$ and $\alpha$ is, in practice, not accessible, $q$ will be typically unknown. This means that, when choosing $r$, we cannot guarantee that the product will not be 0 for all $n$ sufficiently large; this is exceedingly unlikely (see below), however, it explains the absence of uniform estimates.

\begin{lemma}[Main Lemma] Let $0 < q < 1$ and $0 < r \ll 1$. Then, except for a small exceptional set of values of $r$, we have, as $r \rightarrow 0^+$, that
$$ X_n = \log \left|  \prod_{k=0}^{n}  \left(1 -  q e^{rk} \right) \right| $$
is approximated by
$$ X_n = \frac{1+o(1)}{r}\left( \Phi(q) - \Phi(q e^{rn})\right), $$
where $\Phi(y)$ denotes Spence's function
$$ \Phi(y) = - \int_0^y \frac{\log |1-z|}{z} dz.$$

\end{lemma}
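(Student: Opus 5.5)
The plan is to read $X_n$ as a Riemann sum. Write
$$X_n=\sum_{k=0}^{n}\log\left|1-qe^{rk}\right| = \sum_{k=0}^n g(rk), \qquad g(t)=\log|1-qe^{t}|.$$
With the substitution $z=qe^{t}$, $dz/z=dt$, the corresponding integral is
$$\frac1r\int_0^{rn} g(t)\,dt=\frac1r\int_q^{qe^{rn}}\frac{\log|1-z|}{z}\,dz=\frac1r\left(\Phi(q)-\Phi(qe^{rn})\right).$$
So the lemma amounts to showing that the Riemann sum with mesh $r$ equals $r^{-1}$ times the integral, up to a relative error $o(1)$ as $r\to 0^+$. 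We treat $T=rn$ as the natural variable, ranging over compact sets, since that is the regime used in the Main Result.

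\textbf{Step 1: the regular range.} Take $T$ with $qe^{T}$ bounded away from $1$, in particular $T<\log(1/q)$. Then $g$ is smooth with bounded derivative on $[0,T]$, and the Euler--Maclaurin formula (or the elementary trapezoid estimate) gives
$$\sum_{k=0}^{n} g(rk)=\frac1r\int_0^{T}g\,dt+\mathcal O(1).$$
Near $T=0$ the integral is of size $T\log(1-q)$, which is nonzero, so the $\mathcal O(1)$ error is $o(r^{-1})$ relative to the main term. This already covers the whole pre-singularity phase.

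\textbf{Step 2: crossing the singularity.} Let $T>\log(1/q)$, and let $t^*=\log(1/q)$ be the point where $g$ has a logarithmic singularity. The only dangerous terms are those with $rk$ close to $t^*$. The singularity is integrable, so on $|t-t^*|\ge \delta$ Step 1 applies on each side with error $\mathcal O(1)+\mathcal O(1/\delta)$. On the window $|rk-t^*|<\delta$ the integral contributes $\mathcal O(r^{-1}\delta\log(1/\delta))$.

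The sum over the window is where the exceptional set enters. Near $t^*$ we have $|1-qe^{rk}|\approx |rk-t^*|$. Write $\theta=\operatorname{dist}(t^*/r,\mathbb Z)\in[0,1/2]$ for the distance of the nearest lattice point. The window terms are then $\log(r|j\pm\theta|)$ for $0\le j\lesssim \delta/r$. Their sum equals
$$\frac{2\delta}{r}\log r+\frac{2}{r}\int_0^{\delta}\log u\,du\cdot(\text{mod } \ldots)+\mathcal O(\log(\delta/r))+\log(r\theta).$$
Everything except the single term $\log(r\theta)$ is handled by Stirling's formula, i.e. by comparing $\sum\log j$ with $\int\log$. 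The remaining term is $\log(r\theta)=\mathcal O(\log(1/r))$, which is $o(1/r)$, unless $\theta$ is super-exponentially small in $1/r$.

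We therefore define the exceptional set as those $r$ with
$$\operatorname{dist}\bigl(\log(1/q)/r,\mathbb Z\bigr)<e^{-\varepsilon(r)/r}$$
for a suitable $\varepsilon(r)\to0$. This includes the case $\theta=0$, where the product vanishes. The set has tiny measure, and for $r$ outside it the lone bad term contributes $o(1/r)$.

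\textbf{Step 3: assembly.} Choose $\delta=\delta(r)\to0$ slowly, for example $\delta=1/\log(1/r)$. Then all errors are $o(1/r)$. Since $\Phi(q)-\Phi(qe^{T})$ is bounded away from zero for $T$ in compact subsets away from its zeros, the absolute error $o(1/r)$ becomes a relative error $1+o(1)$. At the isolated zeros of $\Phi(q)-\Phi(qe^{T})$ the statement should be read with absolute error $o(1/r)$.

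The main obstacle is Step 2: controlling the single term nearest the singularity. This is what forces the exceptional set, and the window bookkeeping must be done carefully so that everything other than $\log\theta$ is genuinely $o(1/r)$.
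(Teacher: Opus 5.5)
Your proposal is correct and takes the paper's route: replace $\sum_k\log|1-qe^{rk}|$ by $\int_0^n\log|1-qe^{rx}|\,dx$ and substitute $z=qe^{rx}$. Beyond that, you actually prove the sum--integral comparison that the paper only argues heuristically in \S 5.1--5.2 (Euler--Maclaurin away from $t^*=\log(1/q)$, Stirling on a shrinking window, explicit exceptional set $\mbox{dist}(\log(1/q)/r,\mathbb{Z})<e^{-\varepsilon(r)/r}$), and you correctly add the restrictions the statement needs ($rn$ in a compact subset of $(0,\infty)$, only absolute $o(1/r)$ error near the zero of $\Phi(q)-\Phi(qe^{T})$). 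Two slips need fixing: your displayed window sum double-counts $\log r$, since $\frac{2}{r}\int_0^\delta\log u\,du$ already contains it (with $\delta=1/\log(1/r)$ the extra $\frac{2\delta}{r}\log r=-2/r$ is not $o(1/r)$); and the lone term $\log(r\theta)$ already fails to be $o(1/r)$ once $\theta$ is merely exponentially small in $1/r$, not super-exponentially, which is exactly what your definition of the exceptional set encodes.
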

  The approximation can only be true up to an exceptional set: if one of the terms happens to be unusually close to 0, then the product can become arbitrarily small; this is not a problem for the method (one converges unexpectedly faster). However, it is also extremely rare for this to happen as one would need to `accidentally' choose the correct parameters from an exceptionally small set (see \S 5.2). One may think of the Main Lemma as an analysis of the `worst case' which also happens to be the generic case.  The $1+o(1)$ convergence rate could be made more precise (involving a more detailed application of the Euler-Maclaurin formula), however, the convergence is so fast (see also Fig. \ref{fig:app}) that there was no need for that. Spence's function is classical, first published by the Scottish mathematician William Spence in 1809 \cite{spence}; it also arises organically in physics (see, for example, \cite{hooft}).

\begin{center}
\begin{figure}[h!]
    \begin{tikzpicture}
        \node at (0,0) {\includegraphics[width=0.5\textwidth]{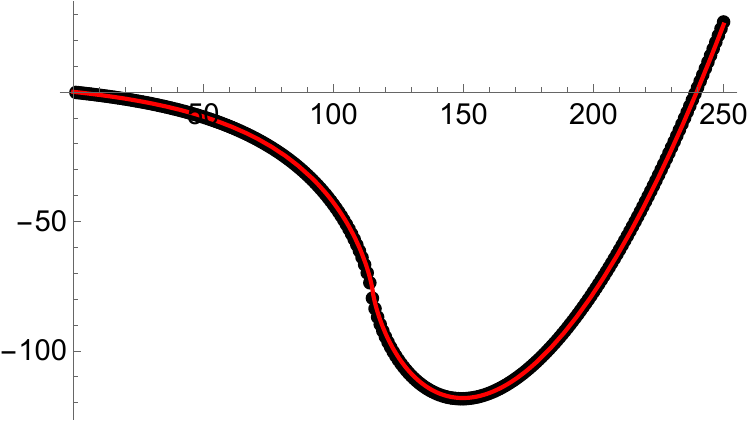}};
        \node at (4.8,-0.2) { $\log \left|\prod_{k=0}^{n} \left(1 - \frac{e^{0.02 k}}{10}\right) \right| $};
        \node at (5, -1) { \color{red} $\frac{1}{0.02} (\Phi(0.1) - \Phi(0.1 e^{0.02 n})$};
    \end{tikzpicture}
    \vspace{-10pt}
    \caption{The values of the product from Fig. 1 ($r=0.02, \tau =0.1$) in black, the approximation suggested by the Lemma (in red).}
    \label{fig:app}
\end{figure}    
\end{center}

\begin{proof}[Proof of the Main Lemma] We write
   \begin{align*}
 \sum_{k=0}^{n} \log \left| 1 - q  e^{k r} \right| \sim \int_0^n \log \left| 1 - q e^{kr} \right| dk
   \end{align*}
whose validity as an approximation (up to an exceptional set of $r$) is extensively discussed in \S 5.1 and \S 5.2. Substituting $z =  q e^{kr}$, we have
$$ \frac{dz}{dk} = qr e^{kr} = r z$$
and thus
$$  \int_0^n \log \left| 1 - q e^{kr} \right| dk = \frac{1}{r} \int_q^{q e^{ rn}} \frac{\log{|1 - z|}}{z} dz.$$
Introducing Spence's function
$$ \Phi(x) = - \int_0^x \frac{\log |1-u|}{u} du,$$
we arrive at
$$  \int_0^n \log \left| 1 - q e^{kr} \right| dk = \frac{1}{r}\left( \Phi(q) - \Phi(q e^{rn})\right).$$
\end{proof}
We also recall that Spence's function can also be explicitely expressed as
$$ \Phi(x) = -\int_0^x \frac{\log|1-u|}{u} du = \begin{cases} \mbox{Li}_2(x) \qquad &\mbox{if}~x \leq 1\\
\frac{\pi^2}{3}  - \mbox{Li}_2\left( \frac{1}{x} \right) - \frac{1}{2}(\log x)^2  \qquad &\mbox{if}~x > 1 \end{cases},$$
where $\mbox{Li}_2: (-\infty, 1)$ is the dilogarithm defined by
$$ \mbox{Li}_2(x) = \sum_{k=1}^{\infty} \frac{x^k}{k^2}.$$
This series expansion can be useful when doing asymptotic analysis of some special cases: for example $\mbox{Li}_2(x) = x + \mathcal{O}(x^2)$ for $x$ small.

\section{Proof of the Theorem}
\begin{proof}
We assume that
$$ f(x) = \frac{1}{2}\left\langle x, Qx \right\rangle$$
is a quadratic form with $Q \in \mathbb{R}^{d \times d}$ being a symmetric, positive-definite matrix. After a change of coordinates, we may assume without loss of generality that $Q$ is diagonal and
$ Q = \mbox{diag}(\lambda_1, \dots, \lambda_d),$
where $\lambda_1 \geq \dots \geq \lambda_d >0$ are the eigenvalues of $Q$.
We also assume that $x_0 \neq 0 \in \mathbb{R}^d$. 
The gradient descent
$$ x_{n+1} = x_n - \tau e^{rn} \nabla f(x_n)$$
is then diagonalized by the problem. In particular, we may assume that none of the coordinates of $x_0$ are 0 (since that coordinate would then stay 0 for all time and we would have reduced the problem to  a similar problem in dimension $d-1$). This leads to a factorization of the recursion: writing $x_n = (x_{n,1}, \dots, x_{n, d})$, a single recursive step is of the form
$$ x_{n+1, i} = x_{n,i} - \tau e^{rn} \lambda_i x_{n,i} = \left(1 - \tau e^{rn} \lambda_i\right) x_{n,i}.$$
Note that, in particular, we have
$$ x_{n,i} = x_{0,i} \prod_{k=0}^{n-1} \left(1 - \tau  \lambda_i e^{rk} \right) \qquad \qquad \qquad (\diamond)$$
which acts as pure multiplication on each coordinate.
We will start by assuming $x_0 \in \mathbb{R}^d$ to be arbitrary and will analyze at what point the stopping criterion $\|x_{n+1} - x_n\| \geq e^r\|x_n - x_{n-1}\|$ comes into effect. Once this happens, we may think of $x_n$ as a new starting point $x_0$ and the procedure continues. Observe that
$$ \|x_{n+1} - x_n\| = \tau e^{rn} \left( \sum_{i=1}^{d} \lambda_i^2 x_{n,i}^2 \right)^{1/2}.$$
The stopping criterion $\|x_{n+1} - x_n\| \geq e^r\|x_n - x_{n-1}\|$ can thus be rewritten as
$$ \tau e^{rn} \left( \sum_{i=1}^{d} \lambda_i^2 x_{n,i}^2 \right)^{1/2} \geq \tau e^r e^{r(n-1)} \left( \sum_{i=1}^{d} \lambda_i^2 x_{n-1,i}^2 \right)^{1/2}$$
which is equivalent to
$$  \|Qx_n\| =  \left( \sum_{i=1}^{d} \lambda_i^2 x_{n,i}^2 \right)^{1/2} \geq \left( \sum_{i=1}^{d} \lambda_i^2 x_{n-1,i}^2 \right)^{1/2} = \|Q x_{n-1}\|.$$
The goal is now, given $x_0 = (x_{0,1}, x_{0,2}, \dots, x_{0,d})$ to predict for what $n$ the stopping criterion becomes activated and what $x_n$ looks like at that point.  The equation $(\diamond)$ shows that each individual coordinate evolves in a deterministic fashion depending only on $\lambda_i$ and $r$. The absolute value of the product is initially decreasing and becomes quite small (as a function of $r$) until
$$ 1 - \tau \lambda_i e^{rk} \sim -1 \qquad \mbox{or} \qquad k \sim \frac{1}{r} \log\left( \frac{2}{\tau \lambda_i} \right)$$
after which it increases again (and exponentially quickly) and becomes unbounded. It is clear that this will happen the fastest for the largest eigenvalue and it will take more time for the smaller eigenvalues. In particular, the index $n$ at which point the cut-off criterion goes into effect has to satisfy
$$  \frac{1}{r} \log\left( \frac{2}{\tau \lambda_1} \right) \leq n \leq  \frac{1}{r} \log\left( \frac{2}{\tau \lambda_d} \right).$$
The first inequality is required since before that time all the coordinates are still decreasing (in absolute value); the second inequality follows in a symmetric fashion: after that, all the coordinates are increasing (in absolute value) and the cut-off criterion is guaranteed to come into effect.
We will now show that, given $x_0 \in \mathbb{R}^d$, for all $r$ sufficiently small (outside of the exceptional set), the heuristic sketched above is indeed correct in the largest and the smallest eigenvalue determine (up to lower order terms), the stopping criterion; moreover, $x_n$, after having been stopped, is mostly a linear combination of the largest and smallest eigenvector (up to small errors) which then gives precise control when restarting the method with $x_0 \leftarrow x_n$. This leads us to consider the function
$$ g(q) = \int_0^n \log \left| 1 - q e^{rx} \right| dx.$$
Clearly $g(0) = 0$ and it is also clear that $g'(0) < 0$ as well as the fact that $g'(q) > 0$ for all $q$ sufficiently large. The same integration by parts as in the Lemma shows  
$$  \int_0^n \log \left| 1 - q e^{kr} \right| dk = \frac{1}{r}\left( \Phi(q) - \Phi(q e^{rn})\right)$$
which can be reinterpreted as a rescaling of Spence's function: the Spence function is monotonically increasing, has a global maximum and is then monotonically decreasing. This shows that, due to the sign, $g(q)$ is first monotonically decreasing, has a global minimum and is then monotonically increasing.  Thus, for any interval $[a,b] \subset (0, \infty)$, we have
$$ \forall~a \leq q \leq b \qquad \qquad g(q) \leq \max\left\{g(a), g(b) \right\}.$$
This implies that for $r$ sufficiently small (up to a small exceptional set),
$$  \prod_{k=0}^{n} \left| 1 - \tau  \lambda_i e^{rk} \right| \leq \max\left\{\prod_{k=0}^{n} \left| 1 - \tau  \lambda_{\max} e^{rk} \right|, \prod_{k=0}^{n} \left| 1 - \tau  \lambda_{\min} e^{rk} \right| \right\}.$$
Moreover, when $\lambda_{\min} < \lambda_i < \lambda_{\max}$, then the difference gets exponentially more pronounced as $r \rightarrow 0^+$.  The simple bounds above imply that $n \sim 1/r$ which means that all these products scale like $\exp(- \mbox{const}/r)$ with different constants depending on the eigenvalue. This leads to the following behavior as $r \rightarrow 0^+$: initially, for small values of $n$, all the products $\prod_{k=0}^{n} \left| 1 - \tau  \lambda_i e^{rk} \right|$ are decaying at different rates; the product corresponding to $\lambda_{\max}$ is decaying the fastest, the product corresponding to $\lambda_{\min}$ is decaying the slowest. Then, once $k$ is so large that $\tau \lambda_{\max} e^{rk} \geq 2$, the product corresponding to the largest eigenvalue starts to increase again exponentially (the other products are still monotonically decreasing). Assuming $\lambda_{\max} = \lambda_1$ and $\lambda_{\min} = \lambda_d$, we have
$$  \|Qx_n\|^2 = \sum_{i=1}^{d} \lambda_i^2 x_{n,i}^2 = \lambda_1^2 x_{n,1}^2 + \lambda_d^2 x_{n,d}^2 + \mbox{exponentially smaller terms},$$
where exponentially smaller refers to exponentially with respect to the larger of the first two terms. This means that the stopping criterion becomes activated when the growth of $\lambda_1^2 x_{n,1}^2$ is faster than the decay of $\lambda_d^2 x_{n,d}^2$ (which is decaying the slowest and is therefore exponentially larger than all the other terms). This happens once
$$\prod_{k=0}^{n} \left| 1 - \tau  \lambda_{\max} e^{rk} \right| \qquad \mbox{and} \qquad \prod_{k=0}^{n} \left| 1 - \tau  \lambda_{\min} e^{rk} \right| \quad \mbox{become comparable in size,}$$
which depends on the values of $\lambda_{\min}, \lambda_{\max}, x_{0,1}, x_{0,d}$, the other coefficients and the other eigenvalues; however, these terms only enter polynomially and are thus, as $r  \rightarrow 0^+$, dominated by the terms that scale exponentially.
When this happens and the restart is triggered, all the other values are exponentially smaller and the procedure is restarted with a value $x_0$ which, up to exponentially smaller terms, has only two nonzero entries (one corresponding to the largest and one corresponding to the smallest eigenvalue).
It remains to understand the size of $n$ where the two products are comparable which, using the Main Lemma, leads to the equation (substituting $x = rn$)
$$ \Phi(\tau \lambda_{\min}) - \Phi(\tau \lambda_{\min} e^{x}) = \Phi(\tau \lambda_{\max}) - \Phi(\tau \lambda_{\max} e^{x}).$$ 
The exponential decay rate that both these quantity undergo is then exactly given by the product, i.e. it is, again as $r \rightarrow 0^+$,
$$ \prod_{k=0}^{n} \left| 1 - \tau  \lambda_{\min} e^{rk} \right| \sim \exp\left[ \frac{1}{r} \left( \Phi(\tau \lambda_{\min}) - \Phi(\tau \lambda_{\min} e^{x}) \right) \right].$$
However, we are interested in the asymptotic behavior as a function of the number of the number of steps $n$ and, here, the number of steps is itself unknown. This suggests considering the geometric mean of the decay which then, using $n = x/r$, leads to 
\begin{align*}
 \left( \prod_{k=0}^{n} \left| 1 - \tau  \lambda_{\min} e^{rk} \right|  \right)^{1/n} &\sim 
  \exp\left[  - \frac{1}{n} \frac{1}{r}  \left( \Phi(\tau \lambda_{\min} e^{x}) \right) - \Phi(\tau \lambda_{\min})\right] \\
  &=   \exp\left[  - \frac{1}{x}  \left( \Phi(\tau \lambda_{\min} e^{x}) \right) - \Phi(\tau \lambda_{\min})\right].
\end{align*}
This completes the argument.
\end{proof}

It is worth pointing out that the proof has a surprising similarity to the behavior of standard gradient descent in its simplest form. Recall that, in the notation of the proof of the Theorem, we are dealing with $ x_{n,i} = x_{0,i} \left(1- \tau \lambda_i\right)^n.$
If $\lambda_1 \geq \lambda_i \geq \lambda_d$, then the optimal stepsize $\tau$ is exactly the one where increasing or decreasing it leads to a larger exponential growth, i.e.
$$ 1 - \tau \lambda_{\min} = \left| 1 - \tau \lambda_{\max} \right| = \lambda_{\max} \tau - 1 \qquad \mbox{and thus} \qquad \tau = \frac{2}{\lambda_{\min} + \lambda_{\max}}.$$
It is worth pointing out that the proof is carried out for $r \rightarrow 0^+$ (outside the exceptional set). However, the argument is actually remarkably robust. We consider this with the quadratic form
$$ f(x,y,z) =  \frac{x^2}{2} + 2 \frac{y^2}{2} + 3 \frac{z^2}{2},$$
initial value $x_0 = (1,20,3)$, stepsize $\tau =0.1$ and $r=0.01$. The first restart is triggered after 245 steps. For comparison, the eigenvalues are $1,2,3$ and
the relevant equation is thus 
$$ \Phi(1/10) - \Phi( 1/10 \cdot e^{0.01 n}) = \Phi(3/10) - \Phi(3/10 \cdot e^{0.01 n})$$ 
which has the solution $ n \sim 241.822$. Just before the restart, we have
$a_{243} \sim (-5 \cdot 10^{-83}, -2 \cdot 10^{-97}, -7 \cdot 10^{-83})$
and $a_{244} \sim (6 \cdot 10^{-84}, 2 \cdot 10^{-97}, 1.7 \cdot 10^{-82})$
which shows that the growth induced by the largest eigenvalue outpaces the decay induced by the smallest (and the third eigenvalue is somewhere in between, leads to a much smaller contribution and does not play any rule).

\subsection{Proof of the Corollary.}
\begin{proof} 
We may assume without loss of generality that $\lambda_{\max} = 1$.  We additionally assume that $\lambda_{\min} < 0.1$. We are interested in solving the equation
$$ \Phi(\tau) - \Phi(\tau e^{x}) = \Phi(\tau \lambda_{\min}) - \Phi(\tau \lambda_{\min} e^{x}).$$
Using the alternative description
$$ \Phi(x) = -\int_0^x \frac{\log|1-u|}{u} du = \begin{cases} \mbox{Li}_2(x) \qquad &\mbox{if}~x \leq 1\\
\frac{\pi^2}{3}  - \mbox{Li}_2\left( \frac{1}{x} \right) - \frac{1}{2}(\log x)^2  \qquad &\mbox{if}~x > 1 \end{cases},$$
the relevant equation to solve is then
$$  \mbox{Li}_2\left( \frac{1}{\tau e^{x}} \right) + \frac{1}{2} \log{(\tau e^{x} )}^2 - \frac{\pi^2}{3} = \mbox{Li}_2(\lambda_{\min} \tau) - \mbox{Li}_2( \tau  e^{x }  \lambda_{\min} ).$$
We first note that we require $\tau e^x \geq 1$ for the left-hand side to be well-defined. Since $\tau \leq \lambda_{\max}$, this also implies $e^x \geq 1$ and $\lambda_{\min} \tau \leq \tau e^x \lambda_{\min}$. Introducing the function $h:[1, \infty] \rightarrow \mathbb{R}$ given by
$$ h(y) =  \mbox{Li}_2\left( 1/y\right) + \frac{1}{2} (\log{y })^2,$$
we note that $h(y) \leq 2.5$ for $y \leq 8$.  Therefore if $\tau e^x \leq 8$, then
$$ \mbox{Li}_2( \tau  e^{x }  \lambda_{\min} ) \leq \mbox{Li}_2( 8  \cdot \lambda_{\min} ) \leq \mbox{Li}_2( 1/2 )$$
and thus
\begin{align*}
      \mbox{Li}_2\left( \frac{1}{\tau e^{x}} \right) + \frac{1}{2} \log{(\tau e^{x} )}^2 +  \mbox{Li}_2( \tau  e^{x }  \lambda_{\min} ) &\leq 2.5 + \mbox{Li}_2(1/2)  < 3.27 = \frac{\pi^2}{3}
\end{align*}
which means that the equation does not have a solution.  We may thus assume that $\tau e^x \geq 8$. 
Since $\mbox{Li}_2$ is non-negative, any solution of the equation also satisfies
$$  \frac{1}{2} \log{(\tau e^{x} )}^2 + \mbox{Li}_2( \tau  e^{x }  \lambda_{\min} ) <  \frac{\pi^2}{3}$$
which, together with $\tau e^x \geq 8$, implies $\mbox{Li}_2( \tau  e^{x }  \lambda_{\min} ) < 1.13$ 
and $\tau e^x \lambda_{\min} < 0.85$. For any $0 < z < 0.85$, we have from the convexity of the dilogarithm that
$$ \mbox{Li}_2(z) \leq 1.4 z.$$
Therefore
$$ \left| \mbox{Li}_2(\lambda_{\min} \tau) - \mbox{Li}_2( \tau  e^{x }  \lambda_{\min} ) \right| \leq  \lambda_{\min}  1.5 (e^x + 1)\tau \leq 3\lambda_{\min} \cdot e^x \tau.$$
This allows us to treat, for $\lambda_{\min}$ small, the right-hand side as a perturbation of 0 and allows us to consider the simplified problem (see also Fig. \ref{fig:help})
$$  \mbox{Li}_2\left( 1/y\right) + \frac{1}{2} (\log{y })^2 = 0.$$
\begin{center}
    \begin{figure}[h!]
    \begin{tikzpicture}
    \node at (0,0) {\includegraphics[width=0.5\textwidth]{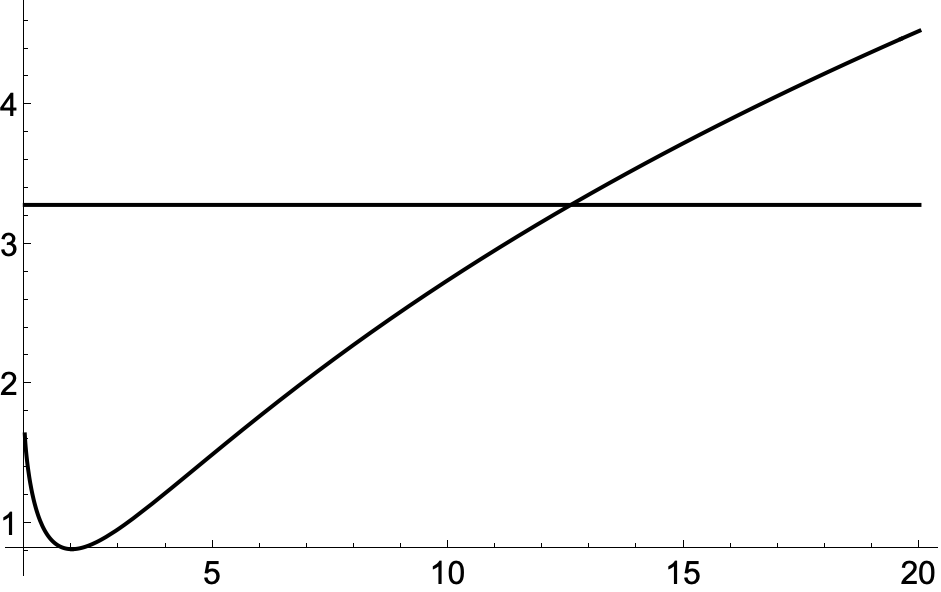}};
    \node at (5.1, 1.8) {$\mbox{Li}_2\left( 1/y\right) + \frac{1}{2} (\log{y })^2$};
        \node at (3.8, 0.7) {$\pi^2/3$};
    \end{tikzpicture}   
    \vspace{-10pt}
        \caption{Local stability analysis.}
        \label{fig:help}
    \end{figure}
\end{center}
This equation has the solution $y^* = 12.5952$ which shows together with the fact that $h'(y^*) \neq 0$
that the solution of the perturbed equation is given by $y^* \pm \mathcal{O}(\lambda_{\min})$. The main result implies
$$ c(\lambda_{\max}, \lambda_{\min}, \tau) = \frac{1}{x} \left(  \mbox{Li}_2(\lambda_{\min} \tau) - \mbox{Li}_2(e^{x } \lambda_{\min} \tau) \right).$$
which together with $e^x \tau \sim 12.5952 \pm \mathcal{O}(\lambda_{\min})$ implies that for $\lambda_{\min}$ sufficiently small
$$ c(\lambda_{\max}, \lambda_{\min}, \tau) \geq \frac{12 }{\log(13/\tau)} \frac{\lambda_{\min}}{\lambda_{\max}}.$$
\end{proof}

\section{Two examples}

\subsection{Regularized softplus.}  Consider the function
 $$ f(x,y) = \log(1 + e^x) + \log(1+e^y) + \frac{x^2+y^2}{1000}.$$
Its global minimum is in $x^* \sim (-4.66, -4.66)$. We consider standard gradient descent with $\tau = 1$ with 100 steps and compare it to the new version with $r = 0.01, 0.05, 0.1, 0.2$. We start $x_0 \in \mathbb{R}^2$ in 50 equispaced points on the disk with radius 10 centered at the origin. Standard gradient struggles to converge; the exponential version gets within distance $10^{-10}$ (independently of the value of $r$). With hindsight, one could increase the stepsize $\tau$ for the gradient flow; picking $\tau=20$ allows standard gradient descent to reach the minimum with an error of $\sim 10^{-8}$ after 100 steps. However, using $\tau = 20$ with exponential stepsizes leads to an error of $10^{-8}$ after at most 70 steps ($r=0.01$), 30 steps ($r=0.05$), 38 steps ($r=0.1$) and 35 steps ($r=0.2$) for all 50 initial points. We see that there is a difference in behavior when $r$ is small with respect to the number of stepsizes $e^{rn} = e^{0.7} \sim 2$ but that the behavior is otherwise fairly consistent across different values of $r$. We obtain an improvement in all cases.

\begin{center}
\begin{figure}[h!]
    \begin{tikzpicture}
\node at (0,0) {\includegraphics[width=0.3\textwidth]{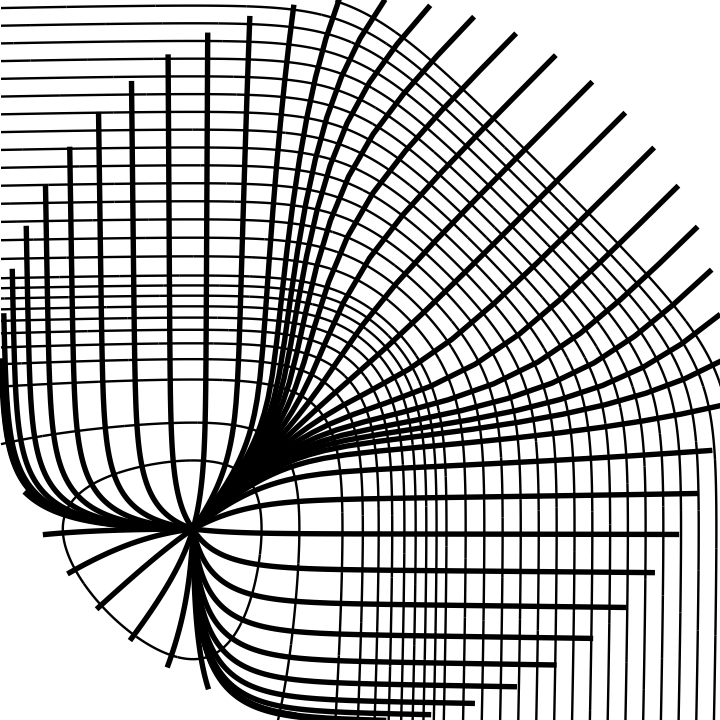}};
\node at (6,0) {\includegraphics[width=0.3\textwidth]{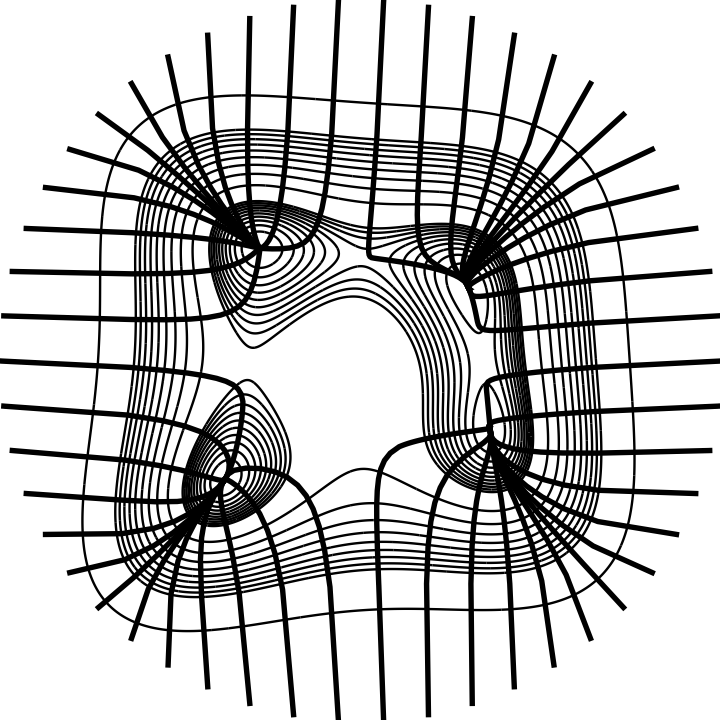}};
    \end{tikzpicture}
\caption{50 gradient flow vs level sets with respect to regularized softplus (left) and  Himmelblau (right).}    
\end{figure}
\end{center}

\subsection{Himmelblau function} We continue with a non-convex example due to Himmelblau \cite{himmelblau}. 
The Himmelblau function is given by
$$ f(x,y) = (x^2 + y - 11)^2 + (x+y^2 - 7)^2.$$
It has four local minima, all distance $\sim 5$ from the origin, and in all of them $f = 0$. We start again in 50 equispaced points on the disk with radius 10 centered around the origin. 
Standard gradient descent leads to some diverging trajectories when $\tau = 0.01$, we therefore choose $\tau = 0.001$, run the procedure for 100 steps and investigate the size of $f(x_{100})$. For these 50 initial conditions, roughly half the trajectories end up somewhere with $f(x_{100}) \sim 10^{-5}$ and roughly half satisfy $f(x_{100}) \sim 10^{-2}$ (and two outliers with $f(x_{100}) \geq 1$). It is interesting to compare this to the exponential stepsizes: when $r \in \left\{0.05, 0.1, 0.2\right\}$, we observe convergence to near machine precision for the same initial conditions. When $r=0.01$, we always end up with a smaller function value (at roughly twice the convergence rate).

\section{Comments and Remarks}

\subsection{Stability with respect to $r$.} The stability of the algorithm with respect to the choice
of $r$ is arguably one of its best features since it implies that the new parameter does not require
any fine-tuning (indeed, $r=0.01$ or $r=0.05$ seems to be a reasonable for a vast number of problems). The reason for this type of tremendous stability lies in the stability when arguing that
\begin{align*}
  \log \prod_{k=0}^{n}  \left|1 -  q e^{rk}\right| = \sum_{k=0}^{n} \log \left|1 -  q e^{rk} \right| \sim \int_0^n \log|1 - q e^{rx}| dx.
\end{align*}
The function $\log| 1 - q e^{rx} |$ is tremendously benign: outside the singularity it is smooth and monotonic, the singularity is only logarithmic. The only real source of error in the approximation is that a single summand may be unexpectedly close to the singularity -- and since the singularity is logarithmic, one would have to be exponentially close to the singularity for it to have any real effect.

\subsection{Lack of uniform control in $r$.} The purpose of this section is to construct an example showing that the exceptional set of $r$ in the Theorem is required. We consider the function
$$ f(x,y) = \frac{x^2}{2} + \frac{y^2}{10}$$
and we consider the gradient descent with exponentially increasing stepsizes and $\tau = 0.1$ and $r=0.1$.
Applying the Main Theorem predicts a decay rate of approximately $\exp(-0.413294 n)$ and this is what is being observed in Fig. \ref{fig:exc} (left). This should be constrasted to the slightly nearby value $r=0.1001123878$. Simply applying the Theorem again predicts the same convergence rate (since $r$ does not even feature as an object in the main result). However, numerically, we observe a slightly faster rate, something like $\exp(-0.43 n)$ ( Fig. \ref{fig:exc}, right).

\begin{center}
\begin{figure}[h!]
    \begin{tikzpicture}
\node at (0,0) {\includegraphics[width=0.45\textwidth]{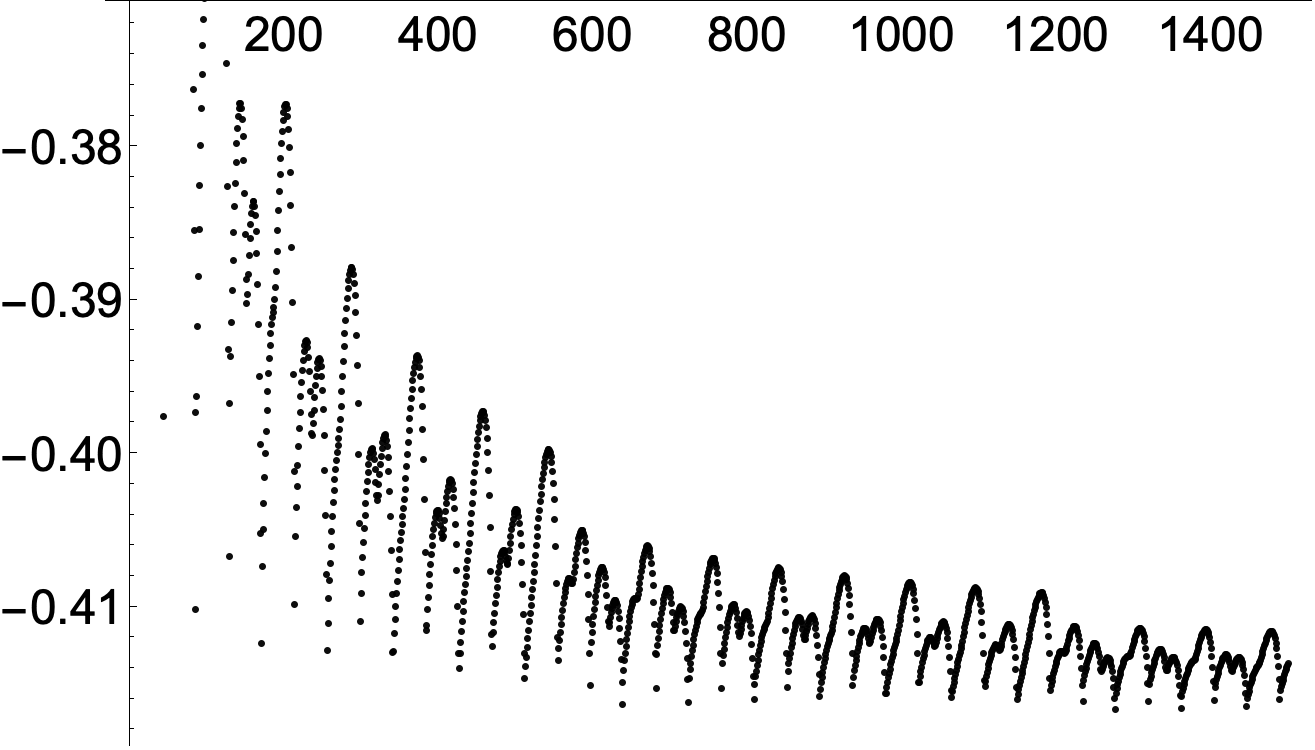}};
\node at (6,0) {\includegraphics[width=0.45\textwidth]{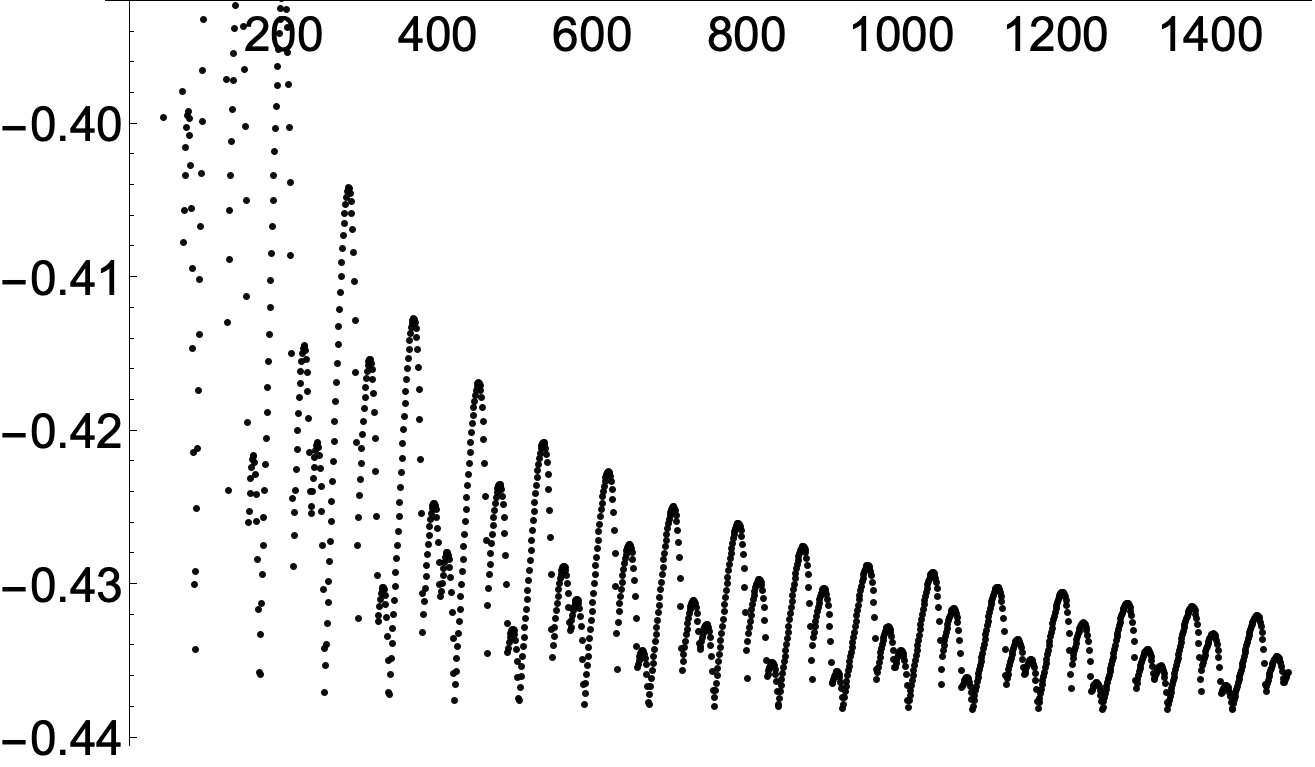}};
    \end{tikzpicture}
\caption{$n^{-1} \log\|x_n - x^*\|$ for $r=0.1$ (left) and the nearby exceptional value $r=0.1001123878$ (right). The example on the right has a slightly faster decay rate.}
\label{fig:exc}
\end{figure}
\end{center}

The reason for this discrepancy lies in the product 
$$ \prod_{k=0}^{n} \left(1 - \lambda_1 \tau e^{rk}\right) = \prod_{k=0}^{n} \left(1 - 0.1 e^{rk}\right) $$
and the 22nd term $(1-0.1 e^{22 r})$ which is small, of order $\sim 2 \cdot 10^{-3}$, but atypically small, of order $\sim 10^{-7}$, when $r=0.1001123878$ (and, indeed, this is how $r$ was constructed). This type of degeneracy requires many digits of accuracy in the computation to even be detected; while it is easy to see in the theory, it is difficult to find in practice; moreover, it seems like something that is difficult to exploit.

\subsection{Suboptimal behavior} \label{sec:count} We tried to find examples where the method fails compared to standard gradient descent. The method can perform suboptimally in one dimension: consider $f(x) = x^2/2$ and a stepsize $0 < \tau \ll 1$. Then, for $0 < r \ll 1$, the method will run $\log(2/\tau)/r$ steps before restarting: the typical decay will be governed by
$$ \frac{1}{2}\int_0^{2} \log| 1 - x| dx = - 1$$
predicting a typical decay of $\sim e^{-n}$. If $\tau = 1-\varepsilon$ is very cleverly chosen, and standard gradient descent is highly effective, the exponential stepsizes will still only result in a decay at rate $\sim e^{-n}$ (the computation is identical). The same reasoning applies to the \textit{bold driver} heuristic. However, the example is at least somewhat misleading because both methods draw their effectiveness from being able to play a slow direction against a fast direction, this requires at least $d \geq 2$ dimensions.


\begin{thebibliography}{10}

\bibitem{Altschuler1} J. M. Altschuler and P. A. Parrilo, Acceleration by stepsize hedging: Silver Stepsize Schedule for smooth convex optimization: JM Altschuler, PA Parrilo. Mathematical Programming, 213 (2025), p. 1105-1118.

\bibitem{Altschuler3} J. M. Altschuler and P. A. Parrilo. Acceleration by stepsize hedging: Multi-step descent and the silver stepsize schedule. Journal of the ACM, 72 (2025): p. 1-38.

\bibitem{Aspremont} A. d'Aspremont, D. Scieur and A. Taylor, Acceleration methods, Foundations and Trends in Optimization, 5(1-2), p1–245, 2021.


\bibitem{bb} J. Barzilai and J. M. Borwein, Two-point step size gradient methods. IMA journal of Numerical Analysis 8 (1988), 141-148.


\bibitem{batt} R. Battiti,. Accelerated backpropagation learning: Two optimization methods. Complex systems 3 (1989), 331-342.

 
\bibitem{dasGupta} S. Das Gupta, B. P. G. Van Parys, and E. K. Ryu, Branch-and-bound performance estimation programming: a unified methodology for constructing optimal optimization methods. Mathematical Programming, 204(1–2):567–639, 2023

\bibitem{do} B. O’Donoghue and E. Candes,  Adaptive restart for accelerated gradient schemes. Foundations of Computational Mathematics 15 (2015), 715-732.

\bibitem{gis} P. Giselsson and S. Boyd, Monotonicity and restart in fast gradient methods, In 53rd IEEE Conference on Decision and Control, pp. 5058-5063. IEEE, 2014.

    
\bibitem{Grimmer} B. Grimmer. Provably faster gradient descent via long steps. SIAM Journal on Optimization, 34(3):2588–2608, 2024.
    
\bibitem{Grimmer2} B. Grimmer, K. Shu, and A. L. Wang. Accelerated objective gap and gradient norm convergence for gradient descent via long steps. INFORMS Journal on Optimization, 7(2):156–169, 2025.

\bibitem{himmelblau} D. Himmelblau, Applied Nonlinear Programming, McGraw-Hill, 1972.

\bibitem{HoExp} N. Ho and T. Ren and S. Sanghavi and P. Sarkar and R. Ward, An Exponentially Increasing Step-size for Parameter Estimation in Statistical Models, 2022, \url{https://arxiv.org/pdf/2205.07999}

  \bibitem{hooft} G. 't Hooft and M. Veltman, Scalar one-loop integrals. Nuclear Physics B, 153 (1979), p. 365-401.
     

\bibitem{LiExp2} Z. Li, K. Lyu and S. Arora, Reconciling modern deep learning with traditional optimization analyses: The intrinsic learning rate. Advances in Neural Information Processing Systems 33 (2020), 14544-14555.


\bibitem{mal} Y. Malitsky and K. Mishchenko, Adaptive Gradient Descent without Descent, Proceedings of the 37 th International Conference on Machine Learning, PMLR 119, 2020. 

\bibitem{renegar} J. Renegar and B. Grimmer, A simple nearly optimal restart scheme for speeding up first-order methods. Foundations of computational mathematics, 22 (2022), p. 211-256.

\bibitem{rosenbrock} H. Rosenbrock, An automatic method for finding the greatest or least value of a function, The Computer Journal 3 (1960): p. 175–184.

\bibitem{roulet} V. Roulet and A. d'Aspremont, Sharpness, restart and acceleration, Advances in Neural Information Processing Systems 30 (2017).

\bibitem{spence} W. Spence, Spence, An Essay on the Theory of the Various Orders of Logarithmic Transcendents: With an Inquiry Into Their Applications to the Integral Calculus and the Summation of Series, John Murray and Archibald Constable and Company, 1809.


\bibitem{vogl} T. Vogl, J. K. Mangis, A. K. Rigler, W. T. Zink, and D. L. Alkon, Accelerating the convergence of the back-propagation method, Biological cybernetics 59 (1988), p. 257-263.

\end{thebibliography}
\end{document}